\theoremstyle{plain}
\newtheorem{theorem}{Theorem}[section]
\newtheorem{conjecture}[theorem]{Conjecture}
\theoremstyle{definition}
\newtheorem{remark}[theorem]{Remark}
\newtheorem{example}[theorem]{Example}
\newcommand{\calC}{\mathcal C}
\newcommand{\calO}{\mathcal O}
\newcommand{\calP}{\mathcal P}
\newcommand{\calH}{\mathcal H}
\newcommand{\bbA}{\mathbb A}
\newcommand{\calL}{\mathcal L}
\newcommand{\bbP}{\mathbb P}
\newcommand{\Proj}{\operatorname{Proj}}
\begin{document}

\title{Gorenstein contractions of multiscale differentials}
\author{Dawei Chen}
\email{dawei.chen@bc.edu}
\orcid{0000-0002-3926-2636} 
\address{Department of Mathematics, Boston College, Chestnut Hill, MA 02467, USA}

\author{Qile Chen}
\email{qile.chen@bc.edu}
\orcid{0000-0001-6776-242X}
\address{Department of Mathematics, Boston College, Chestnut Hill, MA 02467, USA}

\classification{14H10 (primary), 14A21, 14H20, 32G15 (secondary)}
\keywords{Gorenstein singularities; multiscale differentials; residues}
\thanks{Research of D.C. was supported in part by the National Science Foundation under Grant DMS-2301030, Simons Travel Support for Mathematicians, and a Simons Fellowship under Record ID SFI-MPS-SFM-00005694. Research of Q.C. was supported in part by the National Science Foundation under Grant DMS-2001089 and Simons Travel Support for Mathematicians.}

\begin{abstract}
Multiscale differentials arise as limits of holomorphic differentials with prescribed zero orders on nodal curves. In this paper, we address the conjecture concerning Gorenstein contractions of multiscale differentials, originally proposed by Ranganathan and Wise and further developed by Battistella and Bozlee. Specifically, in the case of a one-parameter degeneration, we show that multiscale differentials can be contracted to Gorenstein singularities, level by level, from the top down. At each level, these differentials descend to generators of the dualizing bundle at the resulting singularities. Moreover, the global residue condition, which governs the smoothability of multiscale differentials, appears as a special case of the residue condition for descent differentials. 
\end{abstract}

\maketitle

\section{Introduction}
\label{sec:intro}

Let $\mu = (m_1, \ldots, m_n)$ be a partition of $2g-2$, where $m_i \ge 0$ for all $i$ and $\sum_{i=1}^n m_i = 2g-2$. Let $\calH(\mu)$ denote the moduli space of holomorphic differentials $(C, \omega)$ of type $\mu$, where $C$ is a smooth, connected, and compact complex algebraic curve of genus $g$, and $\omega$ is a holomorphic differential on $C$ with $n$ distinct marked zeros whose orders are prescribed by $\mu$. The study of holomorphic differentials with specified zero orders plays a significant role in moduli theory and surface dynamics. We refer to \cite{Z06, W15, C17} for an introduction to this fascinating subject.  

On the one hand, the degeneration of holomorphic differentials to nodal curves is an important and delicate problem that has been understood fairly well thanks to a series of recent developments \cite{Ch17, G18, FP18, BCGGM18, BCGGM19, BCGGM, CC19, CGHMS22}. The answer to this problem has two components. Using the perspectives of limit linear series or logarithmic geometry, one obtains the so-called generalized multiscale differentials, which serve as virtual limits of the corresponding degenerations. Moreover, a generalized multiscale differential arises as an actual limit if and only if it additionally satisfies the so-called global residue condition.

On the other hand, far less is known about degenerations of differentials to curves with singularities more complicated than nodes. In this paper, we investigate an interesting interplay between these two aspects. 

We first review some related concepts and notations. Suppose $(X, \eta)$ is a generalized multiscale differential of type $\mu$, where $X$ is a pointed nodal curve and $\eta$ satisfies conditions (1), (2), and (3) in \cite[Definitions 1.1 and 1.2]{BCGGM18} (see also \cite{CGHMS22} for the logarithmic description of generalized multiscale differentials). In particular, $(X,\eta)$ admits a level graph $\Gamma$ with levels $0, -1, \ldots, -L$. Denote by $X_i$, $X_{>i}$, and $X_{<i}$ the subcurves of $X$ that lie at level $i$, above level $i$, and below level $i$, respectively. We also denote by $\eta_i$ the part of $\eta$ at level $i$, where $\eta_i$ has poles at the nodes joining $X_i$ to higher levels and is considered to be identically zero below level $i$. 

Next, we review how multiscale differentials arise as limits of ordinary differentials of type $\mu$. To make the exposition transparent and accessible to readers with diverse backgrounds in classical algebraic geometry and flat surface geometry, we focus on the case of a one-parameter degeneration. Suppose the structure of $(X, \eta)$ is obtained from a one-parameter family of differentials $(C_t, \omega_t)$ in $\calH(\mu)$ as $t$ approaches zero. Then, to each level $i$, we can assign an integer $\ell_i$, called the vanishing order at level $i$, such that 
$$0 = \ell_0 < \ell_{-1} < \cdots < \ell_{-L}$$ 
and such that $\eta_i$ is the limit of $t^{-\ell_i} \omega_t$ restricted to $X_{i}$. Moreover, if an edge $e$ in the level graph $\Gamma$ joins two vertices on levels $i$ and $j$, where $i > j$, then the slope $\kappa$ of $e$ (also called the number of prongs in \cite{BCGGM} or the contact number in \cite{CC19}) is given by 
$$\kappa = \frac{\ell_j - \ell_i}{a}, $$ 
where $a$ is the edge length of $e$. Geometrically, $a$ determines the singularity type $uv = t^a$ in the universal curve $\calC$ at the node corresponding to $e$. In this situation, $\eta$ has a zero of order $\kappa - 1$ on the upper branch of the node and a pole of order $-\kappa - 1$ on the lower branch. We remark that analogous structures hold for higher-dimensional families of (generalized) multiscale differentials as well. 

For a projective variety $S$, if the dualizing sheaf of $S$ is a line bundle, we say that $S$ is Gorenstein. If $S$ is a family of Gorenstein curves over a base, we denote by $\omega_S$ the (relative) dualizing bundle of the family. A conjectural relation between Gorenstein contractions of curves and smoothable generalized multiscale differentials (up to suitable semistable modification) was stated in \cite[Conjecture 5.1]{BB23} (see also \cite[Conjecture 1.2]{B24}) from the logarithmic and tropical viewpoints, where the conjecture was attributed to Ranganathan and Wise. This conjecture was verified for hyperelliptic differentials in \cite{BB23}. Further evidence, as well as a classification of Gorenstein curve singularities in genus three, was provided in \cite{B24}. 

Before addressing the conjecture in general, we explain the precise meaning of semistable modification, which consists of two parts. 

First, if $e$ is a long edge in the level graph $\Gamma$ that crosses more than one level, then we subdivide $e$ by adding a semistable rational vertex at each level crossed by $e$. We call such a vertex a semistable chain vertex. If the slope of $e$ is $\kappa$, then each semistable chain vertex carries a differential with a zero of order $\kappa-1$ on the upper edge and a pole of order $-\kappa-1$ on the lower edge. Geometrically, this modification blows up the node $e$ successively in the universal curve $\calC$ to a chain of semistable rational curves (see \cite[Section 3.5]{CGHMS22} for more details). In general, the lengths of the newborn short edges can be rational numbers, and we may apply a base change $t = s^d$, where $d$ is their common denominator, to make these edge lengths integers.
 
Next, for each marked zero $z$ belonging to a vertex $v$ in $\Gamma$, blowing it up successively in the universal curve expands it to a chain of semistable rational vertices starting from $v$, going downward in the level graph, and ending at a semistable leaf vertex that carries $z$. If the zero order of $z$ is $m$, then each semistable vertex in this sequence admits a differential with a zero of order $m$ and a pole of order $-m-2$, and the slope of every edge in the semistable sequence is $\kappa = m+1$. If the initial vertex $v$ is at level $h$, then for the $k$th semistable vertex in the sequence, we place it at the lower level with vanishing order $\ell = \ell_h + \kappa\sum_{j=1}^k a_j$, where $xy = t^{a_j}$ is the local singularity type of the $j$th edge in the semistable sequence obtained from the successive blowups. Furthermore, we can make the semistable sequence long enough so that every stable vertex above a chosen level $i$ in $\Gamma$ lies above a leaf vertex (which suffices for the purpose of contracting the subcurves above the chosen level $i$ as in the statement of Conjecture~\ref{conj} below). Finally, if any edge in the semistable sequence crosses more than one level, we subdivide it as described in the preceding paragraph.  

We denote by $(\tilde{X}, \tilde{\eta}, \tilde{\Gamma})$ the resulting modification of $(X, \eta, \Gamma)$. We also denote by $\tilde{\ell}_i$ the vanishing order at level $i$ in $\tilde{\Gamma}$. The upshot of the modification is that every edge in $\tilde{\Gamma}$ joins two adjacent levels, and every stable vertex (above a chosen level $i$) lies above all marked zeros. With these preparations, in terms of one-parameter smoothing, the aforementioned conjecture can be formulated as follows. 

\begin{conjecture}
\label{conj}
{\rm (i)} Suppose the semistable modification $(\tilde{X}, \tilde{\eta})$ of a generalized multiscale differential $(X, \eta)$ arises as the limit of a one-parameter family $\tilde{\calC}$ of holomorphic differentials $(C_t, \omega_t)$ in $\calH(\mu)$ as $t$ approaches zero. Then, for every level $i$ above all leaf vertices in $\tilde{\Gamma}$, there exists a reduced Gorenstein contraction $\sigma\colon \tilde{\calC} \to \calC'$ such that $\sigma$ contracts exactly $\tilde{X}_{>i}$ and the differential $\tilde{\eta}_i$ at level $i$ descends to a local generator of $\omega_{X'}$, where $X'$ is the central fiber of the one-parameter family $\calC'$ and $X'$ has isolated Gorenstein singularities formed by contracting each connected component of $\tilde{X}_{>i}$. Moreover, $\sigma^{*}\omega_{\calC'} = \omega_{\tilde{\calC}}\big(\sum_{j > i} (\tilde{\ell}_i - \tilde{\ell}_j) \tilde{X}_{j} \big)$. 

{\rm (ii)} Conversely, if for every level $i$, there exists a contraction $\varsigma\colon \tilde{X} \to X'$ such that $\varsigma$ contracts each connected component of $\tilde{X}_{>i}$ to an isolated singularity of $X'$ and $\tilde{\eta}_i$ descends to a local section of $\omega_{X'}$ at the resulting singularities, then $(X, \eta)$ can be smoothed into $\calH(\mu)$. 
\end{conjecture}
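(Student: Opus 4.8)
\medskip

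The plan is to prove both implications through a single mechanism, the Rosenlicht description of the dualizing sheaf of a curve singularity, used constructively for part~(i) and as a constraint for part~(ii). Write $\pi\colon\tilde\calC\to\Delta$ for the one-parameter family over a disk (equivalently the spectrum of a DVR), with central fiber $\tilde X$. For part~(i) I would first produce the contraction as a relative morphism. Set
$$\LL \;:=\; \omega_{\tilde\calC}\Big(\textstyle\sum_{j>i}(\tilde\ell_i-\tilde\ell_j)\,\tilde X_j\Big),$$
the line bundle appearing in the asserted pullback formula; note $\tilde\ell_i-\tilde\ell_j>0$ for $j>i$. The normalized section $s:=t^{-\tilde\ell_i}\omega_t$ is a rational section of $\omega_{\tilde\calC}$ whose polar behaviour along the higher levels — a pole of order $\tilde\ell_i-\tilde\ell_j$ along $\tilde X_j$, coming from $t^{-\tilde\ell_i}\omega_t=t^{\tilde\ell_j-\tilde\ell_i}(t^{-\tilde\ell_j}\omega_t)$ and $t^{-\tilde\ell_j}\omega_t\to\tilde\eta_j$ — is exactly cancelled by the twist, so $s$ becomes a regular section of $\LL$ restricting to $\tilde\eta_i$ on $\tilde X_i$. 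A degree computation on each irreducible component $Y\subset\tilde X$, using the slope relation $\kappa=(\tilde\ell_j-\tilde\ell_i)/a$ at every (now adjacent-level) edge together with the prescribed zero/pole orders $\kappa-1$ and $-\kappa-1$ of $\tilde\eta$, should give $\deg_Y\LL=0$ precisely when $Y\subset\tilde X_{>i}$ and $\deg_Y\LL>0$ otherwise. Granting relative semiampleness of $\LL$, the relative $\Proj$ (equivalently Stein factorization) of $\LL$ produces $\sigma\colon\tilde\calC\to\calC':=\Proj_\Delta\bigoplus_{k\ge0}\pi_*\LL^{\otimes k}$, contracting each connected component of $\tilde X_{>i}$ to an isolated point.

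\medskip

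Next I would establish the Gorenstein property and the descent simultaneously. Let $Z$ be a connected component of $\tilde X_{>i}$ and $p:=\sigma(Z)\in X'$ the resulting singular point; its branches are those of $\tilde X_i$ at the nodes where $Z$ was attached, where $\tilde\eta_i$ has poles of order $-\kappa-1$. By the Rosenlicht description, a local section of the dualizing sheaf $\omega_{X'}$ near $p$ is a differential on these branches whose pole orders obey the bounds imposed by the conductor of the singularity and which pairs to residue zero against $\calO_{X',p}$. The crux is to check that the pole orders $-\kappa-1$ of $\tilde\eta_i$ saturate these bounds, exhibiting $\tilde\eta_i$ as a \emph{generator} of $\omega_{X',p}$; this forces $\omega_{X'}$ to be invertible at $p$, i.e.\ $X'$ is Gorenstein. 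The semistable modification — arranging that every edge joins adjacent levels and every stable vertex lies above all marked zeros — is precisely what pins down the local analytic type of $p$ and makes this matching possible. The pullback formula then follows from duality for the contraction: since $\sigma_*\calO_{\tilde\calC}=\calO_{\calC'}$, the projection formula gives $\sigma_*\LL=\omega_{\calC'}$, so $s$ descends to $\sigma_*s\in\omega_{\calC'}$ restricting to the generator $\tilde\eta_i$; comparing $\divisor(s)$ with $\divisor(\sigma_*s)$ identifies the discrepancy and yields $\sigma^*\omega_{\calC'}=\omega_{\tilde\calC}(\sum_{j>i}(\tilde\ell_i-\tilde\ell_j)\tilde X_j)$.

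\medskip

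For the converse, part~(ii), I would read the same Rosenlicht condition as a constraint rather than a construction. The hypothesis that $\tilde\eta_i$ descends to a local section of $\omega_{X'}$ at each contracted singularity $p$ is equivalent to the vanishing of the residue pairing there, i.e.\ to a linear relation among the residues of $\tilde\eta_i$ at the branches glued to form $p$. Letting $i$ range over all levels and over all connected components contracted at each level, I would show that these residue relations are exactly the linear conditions constituting the global residue condition of \cite{BCGGM18}; in the language of the abstract, the classical global residue condition is the special case of the descent residue conditions arising from contracting one level at a time. Once the global residue condition is verified, the smoothing theorem of \cite{BCGGM18} — a generalized multiscale differential is smoothable within $\calH(\mu)$ if and only if it satisfies the global residue condition — immediately yields a smoothing of $(X,\eta)$.

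\medskip

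I expect the main obstacle to be the local step in part~(i): proving that the prescribed pole orders of $\tilde\eta_i$ make it a \emph{generator}, not merely a section, of $\omega_{X'}$ at the contracted points. Because these singularities are far more general than nodes, their dualizing sheaves require the full Rosenlicht/conductor machinery, and one must verify that crushing an entire connected subcurve $Z$ — a tree of components spread across several levels, carrying the blow-up data $xy=t^{a_j}$ inherited from the semistable modification — produces exactly the Gorenstein singularity whose dualizing generator is $\tilde\eta_i$. Tracking this local data through the contraction, and showing the degree computation for $\LL$ closes up consistently at every such $Z$, is where the technical weight of the argument will lie.
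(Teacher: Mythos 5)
Your overall architecture for part (ii) matches the paper (take $f=1$ in the descent residue condition to recover the GRC and invoke the smoothability theorem of \cite{BCGGM18}), but part (i) has two genuine gaps at exactly the points where the paper does its real work. First, you never prove that $\tilde{\eta}_i$ actually pairs to residue zero against every $f\in\calO_{X',p}$ --- you invoke the Rosenlicht description as a framework and jump straight to the question of whether $\tilde{\eta}_i$ is a \emph{generator}. The paper's central mechanism for the descent is analytic and global: for each $f$, lift it to a function $h$ on $\calC'$ near $p$, and integrate $h_t\cdot t^{-\tilde{\ell}_i}\omega_t$ over the vanishing cycles $V_t$ in the nearby smooth fibers; because removing $V_t$ disconnects the surface, $V_t$ is null-homologous and the integral vanishes, and passing to the limit yields $\sum_{p_i}\Res_{p_i}(\varsigma^*f\cdot\tilde{\eta}_i)=0$. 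Nothing in your proposal supplies this step. Second, your route to the Gorenstein property --- ``check that the pole orders $-\kappa-1$ saturate the conductor bounds'' --- is circular as stated, since the conductor and the local ring $\calO_{X',p}$ are themselves determined by the contraction you are trying to understand; you flag this yourself as the unresolved main obstacle. The paper avoids the local curve-singularity analysis entirely: because the semistable modification has pushed all marked zero sections off $\tilde{X}_{>i}$, the $2$-form $t^{-\tilde{\ell}_i}\omega_t\wedge dt$ is holomorphic and nowhere vanishing on a punctured neighborhood of $p$ in the normal surface $\calC'$, and since $p$ has codimension two, $\omega_{\calC'}$ extends as a trivial line bundle across $p$; Gorenstein-ness of the fiber $X'$ and the generator statement then follow because the descended section is nonvanishing near $p$.

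There is also a problem with your construction of $\sigma$ itself. The bundle $\LL=\omega_{\tilde\calC}\big(\sum_{j>i}(\tilde\ell_i-\tilde\ell_j)\tilde X_j\big)$ has degree $-1$ on each semistable leaf component (a rational curve meeting the rest of $\tilde X$ in a single node), so $\LL$ is not relatively nef, your claimed dichotomy ``$\deg_Y\LL=0$ iff $Y\subset\tilde X_{>i}$, else $>0$'' fails, and $\Proj_\Delta\bigoplus_k\pi_*\LL^{\otimes k}$ does not define a morphism contracting exactly $\tilde X_{>i}$. The paper sidesteps semiampleness altogether by first producing $\sigma$ abstractly via Artin's contraction criterion for the negative-definite configuration $\tilde X_{>i}$ inside the surface $\tilde\calC$ (via \cite[Proposition~2.6]{S13}), and only at the end identifies $\sigma^*\omega_{\calC'}$ with the twisted bundle by comparing degrees component by component and appealing to uniqueness of such a twist. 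Your instinct that the slope relations and the zero/pole orders $\kappa-1$, $-\kappa-1$ drive the degree computation is correct and is how the paper verifies the final pullback formula, but it cannot serve as the construction of the contraction in the form you propose.
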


A proof of Conjecture~\ref{conj} will be given in Section~\ref{sec:proof}. In Remark~\ref{rem:higher}, we will also discuss possible approaches, as well as pending issues, to address this conjecture over a higher-dimensional base. 

We remark that both parts of the semistable modification in the assumption of the conjecture are necessary; otherwise, certain parts of the conjecture do not hold, which we will explain as follows. 

If there exists a long edge $e$ joining $X_{>i}$ and $X_{<i}$, then contracting $X_{>i}$ produces a singularity that touches $X_{<i}$, on which $\eta_{i}$ is identically zero; hence the descent of $\eta_i$ cannot be a local generator of the dualizing sheaf at the singularity (see Example~\ref{ex:chain} below). Nevertheless, this issue does not occur if we subdivide $e$ by adding semistable chain vertices and contract $\tilde{X}_{>i}$ instead. 

Additionally, if a marked zero section $Z$ of $(C_t, \omega_t)$ passes through $X_{>i}$ in the universal curve, contracting $X_{>i}$ makes the image of $Z$ pass through the resulting singularity, which can fail to be Gorenstein (see Example~\ref{ex:leaf} below). Nevertheless, this issue does not occur if we separate $Z$ from $X_{>i}$ by making it pass through a lower semistable leaf vertex in $\tilde{X}$ instead.

We further point out that each semistable vertex possesses a unique polar edge whose residue is zero. Therefore, the semistable modification does not affect the smoothability of $(X, \eta)$, which is governed by the global residue condition (GRC) (see \cite[Definition 1.2 (4)]{BCGGM18}). 

The ideas in this paper can lead to various applications to deformations of Gorenstein singularities, alternative modular compactifications, and the log minimal model program for moduli spaces of curves and differentials (in the sense of \cite{S13, AFS16, BC23, BKN23, CY25, NW25}). We plan to address these questions in future work. 

\section{Proof of the Conjecture}
\label{sec:proof}

\begin{proof}[Proof of Conjecture~\ref{conj}~{\rm (i)}]
By \cite[Proposition 2.6]{S13}, there exists a (unique) birational contraction $\sigma \colon \tilde{\calC} \to \calC'$ such that $\calC'$ is normal and the subcurve $\tilde{X}_{>i}$ in the central fiber $\tilde{X}$ is the exceptional locus of $\sigma$. In particular, each semistable chain vertex above level $i$ is contracted, whereas all semistable leaf vertices remain uncontracted. 

Denote by $Y$ a connected component of $\tilde{X}_{>i}$ that is contracted to a singularity $p$ in the central fiber $X'$ of $\calC'$. For every regular function $f \in \calO_{X', p}$, since $\calO_{\calC'} \twoheadrightarrow \calO_{X'}$, there exists a regular function $h \in \calO_{\calC'}(U)$ for a small neighborhood $U$ of $p$ such that $h|_{X' \cap U} = f$. Note that $\sigma^{-1}(U)$ is an open subset containing $Y$ in $\tilde{\calC}$. For $t$ near zero, let $Y_t \subset \tilde{\calC}_t$ be a family of subsurfaces converging to $Y$ in $U$, and denote by $V_t$ the union of the vanishing cycles in the boundary of $Y_t$ that shrink to the nodes joining $Y$ and $\tilde{X}\setminus Y$. Here, the subsurface $Y_t$ as $t$ approaches zero can be defined by deleting a neighborhood of the core of the flat surface representing the differential $t^{-\tilde{\ell}_i}\omega_t$ at level $i$, where the neighborhood shrinks with $t$ under the flat metric of $t^{-\tilde{\ell}_{i+1}}\omega_t$ above level $i$, and the boundary of the neighborhood consists of the vanishing cycles $V_t$ (see \cite[Figures 14 and 15]{BCGGM18} for an example). 

By the definition of generalized multiscale differentials, we have 
$$\lim_{t\to 0} (t^{-\tilde{\ell}_i} \omega_{t})|_{\tilde{X}_{i}} = \tilde{\eta}_i, $$
where $\tilde{\ell}_i$ is the vanishing order of the family of differentials $(\omega_t)$ on $\tilde{X}_{i}$.  
Let $h_t = h|_{U\cap \calC'_t} = (\sigma^{*}h)|_{U\cap \calC_t}$ for $t$ near zero. Since removing $V_t$ disconnects $Y_t$ from $\calC_t\setminus Y_t$, the homology class of $V_t$ is trivial, which can be verified by using the standard polygonal representation of the topological surface $Y_t$ with boundary $V_t$.  
Therefore, for $t$ near zero, the holomorphic differential $h_t \cdot t^{-\tilde{\ell}_i} \omega_{t}$ on $Y_t$ satisfies 
\begin{eqnarray}
\label{eq:integral}
\int_{V_t} h_t \cdot t^{-\tilde{\ell}_i} \omega_{t} = 0.
\end{eqnarray}
Viewing $Y_t$ as a Riemann surface with boundary $V_t$, the above vanishing also follows from Stokes' Theorem since $d\omega_t = 0$. 

Observe that as $t$ approaches zero, the limit $V$ of $V_t$ consists of nodes $p_i$, where each $p_i$ maps to $p$ under $\sigma$. Furthermore, the differential $h_t \cdot t^{-\tilde{\ell}_i} \omega_{t} $ converges to $\varsigma^{*}f \cdot \tilde{\eta}_i$ on $\tilde{X}_{i}$, where $\varsigma = \sigma|_{\tilde{X}}\colon \tilde{X} \to X'$. 
Therefore, using the same argument as in the proof of \cite[Theorem 1.3]{BCGGM18}, \eqref{eq:integral} implies that  
\begin{eqnarray}
\label{eq:descend}
\sum_{p_i\in V}  {\rm Res}_{p_i} (\varsigma^{*}f \cdot \tilde{\eta}_i) = 0, 
\end{eqnarray}
which means that $\tilde{\eta}_i$ descends to a regular section of the dualizing sheaf $\omega_{\calC'}$ (see \cite[\S1.3.1]{B24} for more details and related references). 
 
Since $Y$ does not intersect any marked zero section of $(\omega_t)$, which has been taken away by a lower semistable leaf vertex, the differential form $t^{-\tilde{\ell}_i} \omega_t \wedge dt$ is holomorphic and nowhere vanishing in $U\setminus p$. Since the codimension of $p$ in $U$ is two, this implies that there exists a Cartier canonical divisor in $U$ which does not contain $p$. Therefore, $\omega_{\calC'}$ is locally a line bundle at $p$ (see \cite[Proposition 5.75]{KM98} and \cite[Section 5]{K13}). Hence, $\calC'$ is Gorenstein at $p$, and consequently the central fiber $X'$ is also Gorenstein at $p$. 

Since $\tilde{\eta}_i$ is the limit of $t^{-\tilde{\ell}_i} \omega_t$ on $\tilde{X}_{i}$, the descent of $\tilde{\eta}_i$ is a regular local section of $\omega_{X'}$ which does not vanish in $X' \cap U$. It follows that $\tilde{\eta}_i$ locally generates $\omega_{X'}$ at the singularities formed by contracting each connected component of $\tilde{X}_{>i}$. 

Finally, the twisted dualizing bundle $\omega_{\tilde{\calC}}\big(\sum_{j > i} (\tilde{\ell}_i - \tilde{\ell}_j) \tilde{X}_{j} \big)$ restricted to $\tilde{X}_j$ for $j > i$ is isomorphic to 
$\omega_{\tilde{C}}|_{\tilde{X}_j}\big( \sum_k \kappa_{j+1, k} q_{j+1, k}-\sum_k \kappa_{j-1, k} q_{j-1, k}\big)$, which is trivial, where $q_{j-1, k}$ and $q_{j+1, k}$ range over the nodes joining $\tilde{X}_j$ to the adjacent upper level curve $\tilde{X}_{j+1}$ and to the adjacent lower level curve $\tilde{X}_{j-1}$, respectively. Similarly, $\omega_{\tilde{\calC}}\big(\sum_{j > i} (\tilde{\ell}_i - \tilde{\ell}_j) \tilde{X}_{j} \big)$ restricted to $\tilde{X}_i$ is isomorphic to $\omega_{\tilde{C}}|_{\tilde{X}_i}(\sum_k \kappa_{i+1, k} q_{i+1, k})$, which admits $\tilde{\eta}_i$ as a regular section, and $\omega_{\tilde{\calC}}\big(\sum_{j > i} (\tilde{\ell}_i - \tilde{\ell}_j) \tilde{X}_{j} \big)$ restricted to $\tilde{X}_{< i}$ 
is isomorphic to $\omega_{\tilde{\calC}}|_{\tilde{X}_{< i}}$.  Since $\sigma^{*}\omega_{\calC'}$ satisfies the same properties and differs from $\omega_{\tilde{\calC}}$ by twisting along the exceptional locus of $\sigma$ supported on $\tilde{X}_{>i}$, it follows that 
$\sigma^{*}\omega_{\calC'} = \omega_{\tilde{\calC}}\big(\sum_{j > i} (\tilde{\ell}_i - \tilde{\ell}_j) \tilde{X}_{j} \big)$ (see \cite[Lemma 2]{S82} for the uniqueness of such an expression). 
\end{proof}

\begin{proof}[Proof of Conjecture~\ref{conj}~{\rm (ii)}]
Since $\tilde{\eta}_i$ descends, it satisfies  
$$\sum_{p_i\in \varsigma^{-1}(p)}  {\rm Res}_{p_i} (f \cdot \tilde{\eta}_i) = 0$$ 
for every regular function $f\in \calO_{X',p}$, where $p$ is the singularity in $X'$ formed by contracting a connected component of $\tilde{X}_{>i}$. Taking $f$ to be the constant function $1$, this recovers the global residue condition (GRC) in \cite[Definition 1.2 (4)]{BCGGM18}. Therefore, $(X, \eta)$ can be smoothed into $\calH(\mu)$, since the GRC does not change under the semistable modification.   
\end{proof}

\section{Examples and Remarks}
\label{sec:ex-rem}

In this section, we present several examples and remarks, to help the reader better understand the various conditions in Conjecture~\ref{conj}, as well as possible generalizations over higher-dimensional bases.  

\begin{example}[(Semistable chain vertices are necessary)]
\label{ex:chain}
Suppose $(X, \eta)$ is given by the triangular graph on the left of \cite[Figure 3]{CGHMS22}, which has a long edge with the slope labeled by $\kappa_3$. In the case of level $i = -1$ in the conjecture, if we contract $X_0$ to a singularity $p$, then the image of $X_{-2}$ passes through $p$. Since $\eta_{-1}$ is zero on $X_{-2}$, it cannot be a local generator of the dualizing sheaf at $p$. 

As explained previously, this issue can be resolved by subdividing the long edge, as in the graph on the right of \cite[Figure 3]{CGHMS22}.    
\end{example}

\begin{example}[(Semistable leaf vertices are necessary)]
\label{ex:leaf}
Consider $X = X_{0} \cup X_{-1}$, where $X_0$ has genus two, $X_{-1}$ has genus one, $(\eta_0) = z_1 + q$, $(\eta_{-1}) = 3z_2 - 3q$, $z_1\in X_0$ and $z_2\in X_{-1}$ are smooth points, and $q$ is the node. Then $(X, \eta)$ is a multiscale differential of type $\mu = (1,3)$ with $\kappa = 2$ at the node $q$. If $(X, \eta)$ can be smoothed via a one-parameter family $\calC$ of $(\calC_t, \omega_t)$ into $\calH(1,3)$ such that there is a birational contraction $\sigma\colon \calC \to \calC'$ with $X_0$ as the exceptional locus and with $\calC'$ Gorenstein, then the line bundle $\sigma^{*}\omega_{\calC'}$ is trivial on $X_0$, which must be given by $\omega_{\calC} (3X_{0})$ for degree reasons. Nevertheless, $\omega_{\calC} (3X_{0})$ restricted to $X_0$ has divisor class $z_1 + 2q - 3q = z_1 - q$, which is nontrivial for $z_1 \neq q$ and leads to a contradiction. 

As explained previously, this issue can be resolved by blowing up at $z_1$ to insert a rational leaf vertex carrying $z_1$ so that the contracted component $X_0$ does not contain any marked zero. Indeed, after the semistable modification, the bundle $\omega_{\tilde{\calC}}(2\tilde{X}_0)$ restricted to $\tilde{X}_0$ becomes trivial. 
\end{example}

\begin{example}[(Gorenstein singularities with $\mathbb G_m$-action and test configurations)]
\label{ex:test}
We consider a trivial one-parameter family of holomorphic differentials $(X, \eta)$ in a stratum $\calH(m_1, \ldots, m_n)$, where the underlying family of curves is $X\times \bbA^1$. Let $\kappa_i = m_i +1$ and choose positive integers $a_i$ such that $a_1 \kappa_1 = \cdots = a_n \kappa_n$. Perform a weighted blowup at each zero in the central fiber such that the resulting semistable rational tail is attached at a singularity of type $x_iy_i = t^{a_i}$ for $i = 1, \ldots, n$. The blown-up ideal can be described explicitly as $(x_i, t^{a_i})$, where $x_i$ is the fiber coordinate at each zero 
and $t$ is the base coordinate. In this case, $\ell_i = a_i \kappa_i$ have the same value $\ell$ for all $i$. Therefore, we can contract the proper transform $\tilde{X}$ and obtain a Gorenstein singularity with $n$ rational branches, which admits a natural $\mathbb G_m$-action. 

Indeed, this construction yields a test configuration for the (logarithmic) dualizing bundle of $X$, where the associated Rees algebra can be used to determine the weights and characters under the $\mathbb G_m$-action (see \cite{BHJ17} and \cite{CY25}).
\end{example}

\begin{remark}[(Partial orders below contracted levels)]
\label{rem:semistable}
Given a level $i$, if our goal is to contract components above level $i$ and obtain Gorenstein singularities as well as the descent of $\tilde{\eta}_{i}$ (rather than achieving this for every level), then we only need to assume that the GRC holds above level $i$, namely, we only need that the multiscale differential above level $i$ can be smoothed out. In particular, we do not need to fully order the vertices below level $i$. Such partially ordered curves correspond to the centrally aligned curves in \cite[Section~4.6.2]{RSPW19}.

Similarly, in this case, we only need to break up long edges that cross level $i$ by adding a semistable chain vertex at level $i$, and push marked zeros that are above level $i$ via a semistable chain down to a leaf vertex at level $i$. Then the same argument works as in the proof of Conjecture~\ref{conj} without any change.
\end{remark}

\begin{remark}[(Contraction from the bottom up)]
As we have seen, contracting multiscale differentials from the top down can ensure that the resulting objects have a nice algebraic structure. 

In contrast, suppose we want to contract from the bottom up, e.g., by forgetting the components in lower levels. In terms of flat geometry, this means discarding the part of a surface that shrinks and disappears under the flat metric induced by the holomorphic differential. The corresponding moduli space is called the WYSIWYG compactification (What You See Is What You Get), which is only a topological compactification. In general, it does not admit any structure of an algebraic variety or a complex analytic space (see \cite{CW21}). 
\end{remark}

\begin{remark}[(Contraction of meromorphic differentials)]
Consider a one-parameter degeneration of meromorphic differentials to a multiscale differential. If there is a marked pole above level $i$ in the limit multiscale differential, then it cannot be pushed lower via a rational semistable chain. If we contract a connected component above level $i$ containing this marked pole, then the corresponding section of this pole passes through the resulting singularity, which prevents us from finding a nowhere vanishing regular differential in a neighborhood away from the singularity. 

In contrast, if all marked poles appear on or below level $i$, then they do not affect the desired contraction above level $i$. 
\end{remark}

\begin{remark}[(Logarithmic R-maps and the twisted dualizing bundle)]
\label{rem:infinity}
In the proof of Conjecture~\ref{conj}, the twisted dualizing bundle that induces the desired contraction can also be interpreted from the viewpoint of logarithmic R-maps (here ``R'' refers to R charge in physics, which corresponds to the dualizing bundle in our context).

Consider as before a one-parameter family of differentials $(\calC_t, \omega_t)$ degenerating to a multiscale differential $(\tilde{X}, \tilde{\eta})$, where $\tilde{\eta}_i$ is the limit of $t^{-\tilde{\ell_i}} \omega_t$ restricted to the component $\tilde{X}_i$ on each level $i$. Let $\bbP = \bbP(\omega_{\tilde{\calC}}\oplus \calO)$ be the natural compactification of the relative dualizing bundle over the universal curve, where $\bm{\infty}\subset \bbP$ denotes the infinity section, and let $\calP$ be its logarithmic stack relative to $\bm{\infty}$ (see \cite[Section 1.3]{CJRS22}). 

Given a level $i$, consider the logarithmic R-map $f\colon \tilde{\calC}\to \calP$ 
which, near $\tilde{X}_j$ for each $0\geq j > i$, is 
induced by $f (t, x) = \big(t, x, [\omega_t(x), t^{\ell_i - \ell_j}]\big)$, and near $\tilde{X}_j$ for each $j \leq i$ is induced by $f (t, x) = (t, x, [\omega_t(x), 1])$. In other words, the components on level $j > i$ fall into $\bm{\infty}$ with multiplicity $\ell_i - \ell_j$, the component on level $i$ does not fall into $\bm{\infty}$ entirely and only meets $\bm{\infty}$ at the marked zeros and nodes on level $i$, and the components below level $i$ fall into the zero section of $\calP$. 

In this context, the twisted dualizing bundle $\omega_{\tilde{\calC}}\big(\sum_{j > i} (\tilde{\ell}_i - \tilde{\ell}_j) \tilde{X}_{j} \big)$ can be simply identified with $\omega_{\tilde{\calC}} \otimes f^{*} \calO (\bm{\infty})$. 
\end{remark}

\begin{remark}[(The conjecture over higher-dimensional bases)]
\label{rem:higher}

    Conjecture~\ref{conj} can be formulated similarly over a higher-dimensional base (see \cite[Conjecture 5.1]{BB23}). Here we briefly discuss possible approaches and pending issues toward resolving the conjecture in general.

First, using the ordering of the vertical boundary divisors in the moduli space of multiscale differentials (see \cite[Section 5]{CMZ22}), one can still define a twisted dualizing bundle $\tilde{\calL}$ on the universal family $\pi\colon \tilde{\calC}\to \tilde{B}$, where $\tilde{B}$ is the higher-dimensional base, such that $\tilde{\calL}$ is trivial when restricted to the fiber components to be contracted and positive when restricted to the other fiber components. Then we may expect that $\Proj \big(\bigoplus_{k \geq 0} \pi_{*}\tilde{\calL}^k\big)$ will produce the desired contraction.

To make this Proj construction work, it is required that $\pi_{*}\tilde{\calL}^k$ be locally free (at least for sufficiently large $k$). This is not a problem over a one-dimensional base, because over a DVR, any torsion-free sheaf is locally free. However, if the base $\tilde{B}$ has higher dimension, one must show that along every one-parameter degeneration to the same limit multiscale differential parametrized in $\tilde{B}$, the resulting contracted central fiber is independent of the degeneration direction.

In the case of genus one, this issue was resolved in \cite{RSPW19}, where a pointed genus-one curve can be regarded as carrying a canonical divisor of type $\mu = (0,\ldots, 0)$, i.e., having a “zero’’ of order $0$ at each marked point. Indeed, \cite[Section 3.7]{RSPW19} showed that when the family is a smoothing, $\pi_{*}\tilde{\calL}^k$ commutes with base change, and hence the problem can be reduced to the case of a one-parameter smoothing. However, in higher genus, several parts of the argument in \cite{RSPW19} require additional work. For example, the slope of an edge in the level graph can be greater than one in higher genus, which causes nonreduced structures supported on the fiber components to be contracted to appear in the relevant analysis. In particular, the inductive procedure of filtering into flat pieces, as in the proof of \cite[Lemma 3.7.4.5]{RSPW19}, needs to be revised in higher genus. 

Another case where the conjecture is known over an arbitrary-dimensional base is for hyperelliptic differentials, as shown in \cite{BB23}, where the argument relies on the hyperelliptic double cover of $\bbP^1$ and therefore reduces essentially to the degeneration of $\bbP^1$. However, this covering structure does not extend to general differentials.

A possible alternative approach is to first construct the contraction abstractly, and then verify its geometric properties. This is the method we employed in the one-parameter case, following \cite{S13}, which uses Artin’s criterion for contracting a (possibly reducible) curve with negative definite self-intersection matrix inside an algebraic surface (see \cite[Corollary (6.12) (b)]{A70} and also \cite{G62} in the complex-analytic setting). To apply this method more generally, we need an analog of Artin’s criterion for a smoothing family of nodal curves over a higher-dimensional base, ensuring not only that the contraction exists for the chosen fiber components, but also that the resulting family remains flat with reduced fibers after contraction. In other words, we need to ensure that such a global contraction, if it exists, produces reduced fiber curves that are consistent with those arising from every one-parameter degeneration in the base, rather than yielding more degenerate fibers with nonreduced structures or embedded points. 

In order not to obscure the main point of the paper, we plan to carry out the above ideas in more detail elsewhere. For now, we hope that the above discussion will motivate further developments in this circle of ideas, both toward resolving the conjecture in full generality and toward exploring the many related applications. 
\end{remark}

\begin{acknowledgements}
We are grateful to Jonathan Wise for inspiring discussions toward resolving the conjecture over higher-dimensional bases. We also thank Luca Battistella, Sebastian Bozlee, Maksym Fedorchuk, Samuel Grushevsky, David Holmes, Yuchen Liu, Martin M\"oller, Adrian Neff, Dhruv Ranganathan, Fei Yu, and Junyan Zhao for helpful communications on related topics. Finally, we thank the referees for their pertinent comments on an earlier version of this work. 
\end{acknowledgements}

\bibliographystyle{alpha}
\bibliography{biblio}
   
\end{document}